\documentclass[a4paper,11pt]{article}
\usepackage[margin=2cm]{geometry}
\usepackage[T1]{fontenc}
\usepackage[utf8]{inputenc}
\usepackage{amsmath,amssymb,amsthm}
\usepackage{graphicx}
\usepackage{xcolor}
\usepackage{tikz}
\usepackage[hidelinks]{hyperref}

\theoremstyle{plain}
\newtheorem{theorem}{Theorem}

\newtheorem{corollary}[theorem]{Corollary}

\newtheorem{claim}[theorem]{Claim}
\newtheorem{conjecture}[theorem]{Conjecture}

\theoremstyle{definition}

\newtheorem{problem}[theorem]{Problem}

\theoremstyle{remark}

\theoremstyle{plain}

\newcommand{\calA}{\mathcal{A}}

\newcommand{\gL}{\gamma^{\rm LD}}

\let\oldenumerate\enumerate
\renewcommand{\enumerate}{
  \oldenumerate
  \setlength{\itemsep}{1pt}
  \setlength{\parskip}{0pt}
  \setlength{\parsep}{0pt}
}

\def\vertex(#1){\put(#1){\circle*{2}}}
\def\vertexo(#1){\put(#1){\circle{2}}}
\def\vert(#1){\put(#1){\circle*{1.5}}}
\def\verto(#1){\put(#1){\circle{1.5}}}
\def\lab(#1)#2{\put(#1){\makebox(0,0)[c]{#2}}}
\title{A note on partitioning the vertex set of a graph into a dominating set and a locating dominating set}

\author{%
\begin{minipage}{0.96\textwidth}
\centering
Dipayan Chakraborty\textsuperscript{1,2,3,4}\quad
Florent Foucaud\textsuperscript{1}\\[0.4em]
Michael A. Henning\textsuperscript{2}\quad
Tero Laihonen\textsuperscript{5}\\[1em]
\small
\textsuperscript{1}Université Clermont Auvergne, CNRS, Clermont Auvergne INP,
Mines Saint-Étienne, LIMOS, 63000 Clermont-Ferrand, France.\\
\href{mailto:dipayancha@gmail.com}{\nolinkurl{dipayancha@gmail.com}},
\href{mailto:florent.foucaud@uca.fr}{\nolinkurl{florent.foucaud@uca.fr}}\\[0.6em]
\textsuperscript{2}Department of Mathematics and Applied Mathematics,
University of Johannesburg, South Africa.\\
\href{mailto:mahenning@uj.ac.za}{\nolinkurl{mahenning@uj.ac.za}}\\[0.6em]
\textsuperscript{3}Department of Computer Science and Mathematics,
Lebanese American University, Beirut, Lebanon.\\[0.6em]
\textsuperscript{4}Centrale Méditerranée, Laboratoire d'Informatique et Systèmes
UMR 7020, Marseille, France.\\[0.6em]
\textsuperscript{5}Department of Mathematics and Statistics,
University of Turku, Turku, FI-20014, Finland.\\
\href{mailto:terolai@utu.fi}{\nolinkurl{terolai@utu.fi}}
\end{minipage}%
}
\date{}

\begin{document}

\maketitle

\begin{abstract}
A set $S$ of vertices in a graph $G$ is a dominating set of $G$ if every vertex not in $S$ has a neighbor in $S$, where two vertices are neighbors if they are adjacent. The domination number, $\gamma(G)$, of $G$ is the minimum cardinality among all dominating sets of $G$. Given a set $S$ of vertices of a graph $G$, two vertices are located by $S$ if they have distinct sets of neighbors in $S$. Moreover, if $S$ locates every pair of vertices not in $S$, then it is called a locating set of $G$. A locating dominating set of $G$ is both a dominating and a locating set of $G$. The locating domination number, $\gL(G)$, is the minimum cardinality among all locating dominating sets of $G$. A notable conjecture in the study of locating dominating sets is to show that the locating domination number of an isolate-free and twin-free graph $G$ of order $n$ is at most $\frac{1}{2}n$. So far, the best approximation to this $\frac{1}{2}n$-upper bound conjecture is known to be $\left \lceil \frac{5}{8}n \right \rceil$. In fact, much in line with the conjecture, an even stronger reformulation proposed in the literature is to ask if it is possible to partition the vertex set of an isolate-free and twin-free graph into two locating sets. However, such partitions into locating sets may not exist if the graph is also allowed to have twins. Continuing with this line of research, we show that if $G$ is an isolate-free (and not necessarily twin-free) graph, then the vertex set of $G$ can be  partitioned into a dominating set and a locating dominating set. As a consequence, we infer that every isolate-free graph $G$ of order~$n$ satisfies $\gamma(G) + \gL(G) \le n$, and we show that the last bound is tight. Moreover, our proof of the existence of a partition of the vertex set of an isolate-free graph into a dominating and a locating dominating set also provides a polynomial-time algorithm to construct such a partition.
\end{abstract}

\section{Introduction}

Our aim is to show that if $G$ is an isolate-free graph, then the vertex set of $G$ can be  partitioned into a dominating set and a locating dominating set. We begin with some basic definitions. Let $G$ be a graph with vertex set $V(G)$, edge set $E(G)$, and of \emph{order} $n = |V(G)|$ and \emph{size} $m(G)=|E(G)|$. Two vertices $u$ and $v$ of $G$ are \emph{adjacent} if $uv \in E(G)$. Two adjacent vertices are called \emph{neighbors}. The \emph{open neighborhood} $N_G(v)$ of a vertex $v$ in $G$ is the set of neighbors of $v$, while the \emph{closed neighborhood} of $v$ is the set $N_G[v] = \{v\} \cup N_G(v)$. For a set $S\subseteq V(G)$, its \emph{open neighborhood} is the set $N_G(S)=\cup_{v\in S} N_G(v)$, and its \emph{closed neighborhood} is the set $N_G[S] = N_G(S)\cup S$. Two vertices $u$ and $v$ are \emph{closed twins} in $G$ if they have the same closed neighborhoods, that is, if $N_G[u] = N_G[v]$. The vertices $u$ and $v$ are \emph{open twins} if they have the same open neighborhoods, that is, if $N_G(u) = N_G(v)$. A graph without closed or open twins is called \emph{twin}-\emph{free}.

The \emph{degree} of a vertex $v$ in $G$ is the number of vertices adjacent to $v$ in $G$, and is denoted by $\deg_G(v)$, and so $\deg_G(v) = |N_G(v)|$. The maximum (minimum) degree among the vertices of $G$ is denoted by $\Delta(G)$ ($\delta(G)$, respectively). An \emph{isolated vertex} in a graph $G$ is a vertex of degree zero. A graph without any isolated vertex is called an \emph{isolate-free} graph. A \emph{leaf} in $G$ is a vertex of degree~$1$ in $G$, and an edge incident with a leaf is a called a \emph{pendant edge}. A \emph{support vertex} is a vertex with at least one leaf neighbor. For a set $S \subseteq V(G)$, the subgraph induced by $S$ is denoted by $G[S]$. Further, the subgraph of $G$ obtained from $G$ by deleting all vertices in $S$ and all edges incident with vertices in $S$ is denoted by $G - S$; that is, $G-S = G[V(G)\setminus S]$. We use $P_n$, $C_n$, and $K_n$ to denote a \emph{path}, a \emph{cycle}, and a \emph{complete graph}, respectively, on $n$ vertices. For $1 \le r \le s$, we denote a \emph{complete bipartite graph} with partite sets of cardinalities $r$ and $s$, respectively, by $K_{r,s}$. A \emph{star} is a complete bipartite graph $K_{1,s}$ for some $s \ge 1$.

A set $S \subseteq V(G)$ is a \emph{dominating set} of a graph $G$ if every vertex in $V(G) \setminus S$ has at least one neighbor in $S$. Equivalently, $S$ is a dominating set of $G$ if $N_G[v] \cap S \ne \emptyset$ for all vertices $v$ of $G$. A vertex $v$ in $G$ \emph{dominates} itself and all its neighbors. The \emph{domination number} $\gamma(G)$ is the minimum cardinality among all dominating sets in $G$.  A \emph{dominating vertex} in $G$ is a vertex $v$ adjacent to every other vertex of $G$.  A \emph{total dominating set} of an isolate-free graph $G$ is a dominating set $S$ with the property that $G[S]$ is isolate-free, that is, $N_G(v) \cap S \ne \emptyset$ for all vertices $v$ of $G$. A dominating set $S$ is an \emph{independent dominating set} of $G$ if $S$ is an independent set in $G$, that is, $G[S]$ consists of isolated vertices.  We refer to the books~\cite{HaHeHe-20,HaHeHe-21,HaHeHe-23,HeYe-book} for a detailed study of domination and its variants.

Given a set $S$ of vertices of a graph $G$, two vertices are \emph{located} by $S$ if they have distinct sets of neighbors in $S$. If $S$ locates each pair of distinct vertices in $V(G) \setminus S$, then $S$ is called a \emph{locating set} of $G$. A \emph{locating dominating set} of $G$, abbreviated LD-set, is a dominating set $S$ of $G$ with the additional property that every two vertices not in $S$ are located by $S$. Hence an LD-set is a vertex subset $S$ of $G$ such that $S$ is a dominating set of $G$ and $N_G(u) \cap S \ne N_G(v) \cap S$ for all distinct vertices $u, v \in V(G) \setminus S$. Thus, every two vertices outside the dominating set $S$ are located by the set $S$. The \emph{locating domination number}, $\gL(G)$, is the minimum cardinality among all LD-sets of $G$. The concept of an LD-set was introduced by Slater in the 1980s~\cite{Sl-88}, who first coined the concept \emph{locating dominating set}. We remark that a locating dominating set is also called a locating dominating code in the literature. We refer the reader to the excellent survey article on locating dominating sets in graphs by Lobstein, Hudry, and Charon~\cite{LoHuIr-20}. Locating dominating sets are now very well studied in the literature (see,~\cite{BoChFa-25,ChFoHaHeWa-24,ChFoLe-24,ChFoPaWa-24} for a small sample of recent papers on the topic).

\subsection{Motivation}

The following classic 1962 result by Ore showed that for any isolate-free graph, its vertex set can be partitioned into two dominating sets.

\begin{theorem}{\rm (\cite{ore})}
\label{ore}
If $G$ is an isolate-free graph of order~$n$ and $S$ is a minimal dominating set of $G$, then $V(G) \setminus S$ is a dominating set, and thus $\gamma(G) \le \frac{1}{2}n$.
\end{theorem}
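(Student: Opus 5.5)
The plan is to argue by contradiction from the minimality of $S$, using only the assumption that $G$ has no isolated vertices. Let $S$ be a minimal dominating set of $G$. Suppose $V(G)\setminus S$ is not a dominating set. Then some vertex $v$ satisfies $N_G[v]\cap (V(G)\setminus S)=\emptyset$. This means $v\in S$ and every neighbor of $v$ also lies in $S$.

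The key step is to show that $S\setminus\{v\}$ is still a dominating set, which contradicts minimality. Since $G$ is isolate-free, $v$ has at least one neighbor $u$, and $u\in S\setminus\{v\}$, so $v$ itself remains dominated. We also need to check every vertex $w\notin S$. Such a $w$ has a neighbor in $S$, and that neighbor cannot be $v$, because $v$ has no neighbors outside $S$. Hence $w$ has a neighbor in $S\setminus\{v\}$. Finally, vertices of $S\setminus\{v\}$ dominate themselves. Thus $S\setminus\{v\}$ dominates $G$, a contradiction. So $V(G)\setminus S$ is a dominating set.

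For the bound, $S$ and $V(G)\setminus S$ are disjoint dominating sets whose sizes sum to $n$. Hence
\[
\gamma(G)\le\min\{|S|,\,n-|S|\}\le \tfrac{1}{2}n .
\]
A minimal dominating set always exists, since $V(G)$ itself is dominating and we can delete vertices while domination is preserved.

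There is no real obstacle here. The only point needing care is the use of the isolate-free hypothesis, which ensures that $v$ is dominated by another vertex of $S$ once it is removed.
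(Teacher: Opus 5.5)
Your proof is correct and complete. The argument (if $V(G)\setminus S$ fails to dominate some $v$, then $S\setminus\{v\}$ still dominates, with the isolate-free hypothesis used only to keep $v$ dominated) is the standard proof of Ore's theorem; the paper gives no proof of its own and simply cites Ore, so there is nothing further to compare.
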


Ore's result that the vertex set of graph can be partitioned into two dominating sets does not necessarily extend to other types of domination parameters. For example, the vertices of a $5$-cycle cannot be partitioned into a dominating set and a total dominating set. However it is shown in~\cite{HeSo2008} that if $G$ is a connected graph with minimum degree at least~$2$ and $G$ is not the $5$-cycle, then the vertex set of $G$ can be partitioned into a dominating set and a total dominating set.  Hence, a natural problem is to consider which graphs can be partitioned into two specific types of dominating sets.  Such problems have been studied in~\cite{Bange78,brodorgodhenn,DeDeHa2019,DeHaHe2017,dorfhenn2005,GoHe-18,HaHe_24a,HaHe_24b,HeTe-98,HeLoRa2010,HeLoRaSo,HePe-19,HeSo2008,HeSo2009,HeYe_Ch13,Justin-PhD,SoHe2011} and elsewhere.

Much of the interest in locating dominating sets is the $\frac{1}{2}n$-upper bound conjecture on the locating domination number of twin-free and isolate-free graphs by Garijo, Gonz\'alez, and M\'arquez~\cite{GaGpMa-14} in 2014, with a slightly stronger reformulation by Foucaud and Henning in~\cite{FoHe-16b}.

\begin{conjecture}{\rm (\cite{GaGpMa-14,FoHe-16b})}
\label{conj:Ldom-bound}
If $G$ is an isolate-free, twin-free graph of order~$n$, then $\gL(G) \le \frac{1}{2}n$.
\end{conjecture}

Conjecture~\ref{conj:Ldom-bound} is known to hold for special classes of graphs, including graphs without $4$-cycles~\cite{GaGpMa-14}, graphs of girth at least~$5$ and minimum degree at least~$2$~\cite{BFH2015}, split graphs and co-bipartite graphs~\cite{FHLS2016}, line graphs~\cite{FH2017}, maximal outerplanar graphs~\cite{CGHMMT2021}, block graphs~\cite{ChFoPaWa-24} and cubic graphs~\cite{FoHe-16b} with the latter being generalized to subcubic graphs~\cite{CHL2024}. As a motivation to prove Conjecture~\ref{conj:Ldom-bound}, several authors have considered the following problem which has a slightly stronger formulation.

\begin{problem}{\rm (\cite{GaGpMa-14,FoHe-16, FHLS2016})}
\label{problem:disjoint-LDS}
Determine classes of isolate-free, twin-free graphs $G$ with the property that $V(G)$ can be partitioned into two locating dominating sets.
\end{problem}

Notice that determining the graph class in Problem~\ref{problem:disjoint-LDS} to be ``all isolate-free and twin-free graphs'' necessarily proves Conjecture~\ref{conj:Ldom-bound}. Problem~\ref{problem:disjoint-LDS} has so far been answered positively for bipartite graphs~\cite{GaGpMa-14}, for block graphs~\cite{ChFoPaWa-24} and, more recently, in~\cite{FoMaPaPr-26} for distance-hereditary graphs, maximal outerplanar graphs, split graphs and co-bipartite graphs. However, it is worth mentioning here that the partitioning technique of taking a minimal locating dominating set $D$ of an isolate-free and twin-free graph $G$ and showing that its complement $V(G) \setminus D$ is also locating dominating does not work to prove Problem~\ref{problem:disjoint-LDS}, as illustrated in~\cite[Figure~3]{GaGpMa-14}. However, as a preliminary investigation, it is not yet known to the authors if there exist examples of isolate-free and twin-free graphs whose vertex sets cannot be partitioned into two locating dominating sets (notice that finding such an example does not, however, disprove Conjecture~\ref{conj:Ldom-bound}	as long as a minimum locating dominating set of the example has cardinality at most $\frac{1}{2}n$). In a recent paper by Bousquet et al.~\cite{BoChFa-25} where the best-known approximation to the $\frac{1}{2}n$-upper bound conjecture has been improved to $\left \lceil \frac{5}{8}n \right \rceil$, the authors also ask a more general version of the question in Problem~\ref{problem:disjoint-LDS} that effectively translates to asking the following.

\begin{problem}{\rm (\cite{BoChFa-25})}
\label{problem:disjoint-LS}
Given an isolate-free and twin-free graph $G$, can its vertex set be partitioned into two locating sets?
\end{problem}

Notice that any graph which satisfies Problem~\ref{problem:disjoint-LDS} automatically also satisfies Problem~\ref{problem:disjoint-LS}, since a locating dominating set is also, by definition, a locating set (and moreover, one can obtain a locating dominating set from any locating set by adding at most one vertex to the latter set). Thus, any isolate-free and twin-free graph from either the class of block graphs or bipartite graphs satisfies Problem~\ref{problem:disjoint-LS}. Moreover, it is not too difficult to verify that, for example, the vertex set of a graph from the class of isolate-free and twin-free either cobipartite or split graphs can be partitioned into its two canonical parts (which are either cliques or independent sets) each of which serves as a locating set. Notice that answering Problem~\ref{problem:disjoint-LS} will imply $\gL(G) \le \left \lceil \frac{1}{2} (n+1) \right \rceil$ for any general isolate-free and twin-free graph $G$, which too would bring us quite close to proving Conjecture~\ref{conj:Ldom-bound}.

Thus, in this line of research as prescribed in both Problems~\ref{problem:disjoint-LDS} and~\ref{problem:disjoint-LS}, given an isolate-free and twin-free graph $G$, the objective is to maximize the sum-total of the ``locating (dominating) power'' of vertex subset $A \subset V(G)$ on its complement $V(G) \setminus A$ and vice-versa. However, such questions need to be slightly modified when we allow the graphs to contain twins. For example, if $G \cong K_{1,n-1}$ (which contains open twins), then it is certainly not possible to partition the vertex set of $G$ into two locating dominating (or locating) sets since any locating (dominating) set of $G$ requires at least all-but-one of its leaves and thus, its complement cannot be a locating (dominating) set. Similar is the story for $G \cong K_n$ (which contains closed twins). In both these examples, however, it is possible to partition the vertex set into a dominating set (consisting of a single dominating vertex of $G$) and a locating dominating set (the complement of the dominating set). Therefore, this motivates us to consider the following question on isolate-free (and not necessarily twin-free) graphs.

\begin{problem} \label{problem:disjoint-DLD}
Can an isolate-free graph be partitioned into a dominating and a locating dominating set?
\end{problem}

In~\cite{GaGpMa-14}, the authors show that the answer to Problem~\ref{problem:disjoint-DLD} is ``yes'' if the input graph is also twin-free. In fact, they prove that if $D$ is any minimal dominating set of an isolate-free and twin-free graph $G$, then $V(G) \setminus D$ is a locating-dominating set of $G$. However, the proof technique for the last result in~\cite{GaGpMa-14} does not adapt to prove Problem~\ref{problem:disjoint-DLD} if we allow $G$ to have twins as well. This can been seen by taking $G \cong K_{1,n}$ (or $G \cong C_4$), for example, where the set $D$ of all leaves (respectively, two non-adjacent vertices) form a minimal dominating set but the set $V(G) \setminus D$ is not locating dominating.

In this paper, we answer Problem~\ref{problem:disjoint-DLD} positively, that is, without the extra requirement of the input graph being twin-free. Moreover, our proof of the existence of a partition of the vertex set of an isolate-free graph into a dominating and a locating domonating set also provides a polynomial-time algorithm to construct such a partition. This also immediately implies that for any isolate-free graph $G$ of order $n$, we have $\gL(G) \le n - \gamma(G)$. Hence, the partition proposed in Problem~\ref{problem:disjoint-DLD} may be considered as a generalization of the $\frac{1}{2}n$-upper bound in Conjecture~\ref{conj:Ldom-bound} for isolate-free graphs which are allowed to have twins. Moreover, this $(n - \gamma(G))$-upper bound is tight since, considering the same examples as before, for any graph $G \cong K_{1,n-1}$ or $G \cong K_n$, we have $\gamma(G) = 1$ and $\gL(G) = n-1$.

A similar result for \emph{identifying codes}\footnote{an \emph{identifying code} $D$ of a graph $G$ is one which is a dominating set of $G$ and has the extra condition that $N_G[u] \cap D \ne N_G[v] \cap D$ for all distinct $u,v \in V(G)$. Identifying codes exist only when the graph has no closed twins. The \emph{identification number} of a graph $G$, denoted $\gamma^{\rm ID} (G)$, is the smallest cardinality among all identifying codes of $G$} has been established by Chakraborty et al. in~\cite{CFHL_Trees2024}, where it has been shown that $\gamma(T) + \gamma^{\rm ID}(T) \le n$ for all trees of order $n$ except when $T \cong P_4$. Since it is not too difficult to see that an identifying code of a graph $G$ is also a locating dominating set of $G$, it can be inferred that $\gamma(T) + \gL(T) \le n$ as well for all trees (also when $T \cong P_4$ by simply considering one set containing the two leaves and the other set being its complement). However, our work in this paper proves that the last inquality holds for all isolate-free graphs.

\section{Main results}

In this paper we study graphs whose vertex set can be partitioned into a dominating set and a locating dominating set. We refer to such a partition of the vertices of a graph $G$ as a DLDS-\emph{partition} of $G$ (standing for ``dominating, locating dominating set partition''). If $G$ has a DLDS-partition, then we say that $G$ is a DLDS-\emph{graph}, standing for ``dominating, locating dominating set graph.'' We shall prove the following result which shows that the vertex set of every isolate-free graph has a DLDS-partition.

\begin{theorem}
\label{thm:main}
Every isolate-free graph is a DLDS-graph.
\end{theorem}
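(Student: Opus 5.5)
The plan is to take a minimum dominating set $D$ of $G$ that, among all $\gamma$-sets, has the largest number of edges in $G[D]$, and to show that $(D, V(G)\setminus D)$ is a DLDS-partition. Write $L = V(G)\setminus D$.

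Since $D$ is in particular a minimal dominating set and $G$ is isolate-free, Theorem~\ref{ore} gives that $L$ is a dominating set of $G$. Thus $D$ is dominating and $L$ is dominating. It remains to show that $L$ is locating, that is, every two distinct vertices $u,v \in D = V(G)\setminus L$ satisfy $N_G(u)\cap L \neq N_G(v)\cap L$.

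Suppose instead that $u,v \in D$ are distinct with $N_G(u)\cap L = N_G(v)\cap L = S$. Because $L$ dominates $u \notin L$, the set $S$ is nonempty. The key step is to exploit minimality of $D$: $u$ must have a private neighbor with respect to $D$.
\begin{itemize}
\item That private neighbor cannot lie in $L$. Such a vertex would belong to $S$, hence be adjacent to $v \in D$ as well, and so would not be private to $u$.
\item Hence the only private neighbor of $u$ is $u$ itself, so $u$ has no neighbor in $D$. By symmetry the same holds for $v$.
\item Consequently $N_G(u) = N_G(v) = S \subseteq L$.
\end{itemize}

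We now perform a swap. Pick $x \in S$ and set $D' = (D\setminus\{u\})\cup\{x\}$. We check that $D'$ is dominating:
\begin{itemize}
\item every vertex previously dominated only through $u$ lies in $N_G[u] = \{u\}\cup S$;
\item the vertices of $S$ are still dominated by $v$;
\item $u$ is dominated by $x$.
\end{itemize}
Moreover $|D'| = |D| = \gamma(G)$, so $D'$ is again a $\gamma$-set.

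Count edges of $G[D']$ compared with $G[D]$. Removing $u$ loses no edges, since $u$ was isolated in $G[D]$. Adding $x$ gains at least the edge $xv$, as $x \in S = N_G(v)$ and $v \in D'$. So $G[D']$ has strictly more edges than $G[D]$, contradicting the choice of $D$. Hence $L$ locates all pairs of vertices of $D$, $L$ is a locating dominating set, and $(D,L)$ is a DLDS-partition.

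The main point needing care is the private-neighbor step: the conclusion that two unlocated vertices of $D$ must both be isolated in $G[D]$ with identical neighborhoods is what makes the swap available. For an algorithmic version, one would replace ``minimum'' by a local-improvement procedure on the potential $(|D|, -|E(G[D])|)$, ordered lexicographically:
\begin{itemize}
\item start from any minimal dominating set;
\item whenever a bad pair $u,v$ exists, perform the swap above, which strictly increases the number of edges in $G[D]$ without increasing $|D|$;
\item then prune $D'$ to a minimal dominating subset, which only decreases $|D|$.
\end{itemize}
The potential strictly improves at each iteration and takes polynomially many values, so the procedure terminates in polynomial time.
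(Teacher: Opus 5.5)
Your proof is correct, and it takes a genuinely different route from the paper. The paper argues by induction on $n$. It successively disposes of a dominating vertex, a support vertex together with its leaves, a pair of open twins, and finally a maximal clique $C$ with the set $I$ of isolated vertices of $G-C$. In each case it repairs the DLDS-partition inherited from a smaller graph $G'$ by hand.

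You replace all of this with a single extremal choice: a $\gamma$-set $D$ maximizing $|E(G[D])|$. One private-neighbour argument then shows that an unlocated pair $u,v\in D$ must consist of isolated vertices of $G[D]$ with $N_G(u)=N_G(v)\subseteq L$. One swap $u\mapsto x\in S$ contradicts the extremal choice. This is essentially the classical Bollob\'as--Cockayne argument. The same swap even shows that every vertex of such a $D$ has an external private neighbour in $L$, which is stronger than being located by $L$.

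Your route gives two things the paper's does not. It is short and free of case analysis. It also proves a stronger statement: the dominating part of a DLDS-partition can always be taken to be a \emph{minimum} dominating set, so the complement of a suitable $\gamma$-set directly witnesses $\gL(G)\le n-\gamma(G)$. The paper's recursion gives no control over $|D|$.

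Your approach also clarifies the paper's remark that an arbitrary minimal dominating set can fail, as with the leaves of a star. The extremal choice is exactly what excludes such bad minimal dominating sets.

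Your algorithmic variant is sound. The private-neighbour step and the swap use only minimality of $D$. The lexicographic potential $(|D|,-|E(G[D])|)$ strictly improves at each swap-then-prune round and takes $O(n^3)$ values. This gives a polynomial-time algorithm comparable to the paper's recursive one.
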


As an immediate consequence of Theorem~\ref{thm:main}, we infer the following result.

\begin{corollary}
\label{cor:main}
If $G$ is an isolate-free graph of order~$n$, then $\gamma(G) + \gL(G) \le n$.
\end{corollary}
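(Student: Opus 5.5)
The plan is to deduce Corollary~\ref{cor:main} directly from Theorem~\ref{thm:main}, taking that theorem as given. Let $G$ be an isolate-free graph of order~$n$. By Theorem~\ref{thm:main}, $G$ is a DLDS-graph. So there is a partition $(D,L)$ of $V(G)$ in which $D$ is a dominating set of $G$ and $L$ is a locating dominating set of $G$. Since the two parts partition $V(G)$, we have $|D| + |L| = n$.

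Next I compare each part with the corresponding minimum. The domination number $\gamma(G)$ is the minimum cardinality over all dominating sets, and $D$ is one of them, so $\gamma(G) \le |D|$. Likewise, $\gL(G)$ is the minimum cardinality over all LD-sets, and $L$ is one of them, so $\gL(G) \le |L|$. Adding these two inequalities gives
\[
\gamma(G) + \gL(G) \le |D| + |L| = n,
\]
which is the claimed bound.

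There is no genuine obstacle here: all the difficulty lies in Theorem~\ref{thm:main}, which supplies the partition. The one point to check is that both parts of the partition are nonempty, so that the minima are attained by actual sets in $G$. This holds because $G$ has at least one vertex and every dominating set of a nonempty graph is nonempty. The tightness examples $K_{1,n-1}$ and $K_n$ noted in the introduction are not needed for the inequality itself.
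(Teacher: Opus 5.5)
Your proposal is correct and is exactly the paper's argument: the corollary is an immediate consequence of Theorem~\ref{thm:main}, using $\gamma(G)\le |D|$, $\gL(G)\le |L|$ and $|D|+|L|=n$ for the partition $\{D,L\}$. Your nonemptiness check is harmless but unnecessary, since $\gamma(G)$ and $\gL(G)$ are minima over families that already contain $D$ and $L$.
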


We again remark that the bound in Corollary~\ref{cor:main} is tight. Apart from stars $K_{1,n-1}$ and complete graphs $K_n$, another simple example is to take the corona of a graph, where the \emph{corona} $G = H \circ K_1$ of a connected graph $H$ is the graph obtained from $H$ by adding for each vertex $v \in V(H)$ a new vertex $v'$ and the edge $vv'$. In this case, if $D$ is the set of all vertices added to $H$ when constructing $G$, then the partition $\{D,V(H)\}$ is an example of a DLDS-partition of $V(G)$ into a dominating set $D$ and a locating dominating set $V(H)$. In this example, if $G$ has order~$n$, then $\gamma(G) = \gL(G) = \frac{1}{2}n$, and so  $\gamma(G) + \gL(G) = n$. For example, the corona $G = C_5 \circ K_1$ of a $5$-cycle is illustrated in Figure~\ref{fig:corona}, where the set $D$ is indicated by the white vertices and the locating dominating set by the shaded vertices. (We remark that the DLDS-partition in this example is not unique.)

\begin{figure}[htb]
\begin{center}
\begin{tikzpicture}[scale=.7,style=thick,x=1cm,y=1cm]
\def\vr{2.75pt}
\path (0,3) coordinate (v4);
\path (0,4) coordinate (v5);
\draw (v4) -- (v5);
\path (2,3) coordinate (u4);
\path (2,4) coordinate (u5);
\draw (u4) -- (u5);
\path (4,3) coordinate (w4);
\path (4,4) coordinate (w5);
\draw (w4) -- (w5);
\path (6,3) coordinate (x4);
\path (6,4) coordinate (x5);
\draw (x4) -- (x5);
\path (8,3) coordinate (y4);
\path (8,4) coordinate (y5);
\draw (y4) -- (y5);
\draw (v5) -- (u5);
\draw (u5) -- (w5);
\draw (w5) -- (x5);
\draw (x5) -- (y5);
\draw (v5) to[out=60,in=120, distance=1.5cm ] (y5);
\draw (v4) [fill=white] circle (\vr);
\draw (v5) [fill=black] circle (\vr);
\draw (u4) [fill=white] circle (\vr);
\draw (u5) [fill=black] circle (\vr);
\draw (w4) [fill=white] circle (\vr);
\draw (w5) [fill=black] circle (\vr);
\draw (x4) [fill=white] circle (\vr);
\draw (x5) [fill=black] circle (\vr);
\draw (y4) [fill=white] circle (\vr);
\draw (y5) [fill=black] circle (\vr);
%
\draw (4,4.5) node {{\small $H = C_5$}};
\end{tikzpicture}
\caption{A DLDS-partition of the corona $G = C_5 \circ K_1$ of a $5$-cycle}
\label{fig:corona}
\end{center}
\end{figure}

\section{Proof of Theorem~\ref{thm:main}}
\label{S:main-proof}

We proceed by induction on the order~$n$ of an isolate-free graph $G$ to show that $G$ is a DLDS-graph. If $n = 2$, then $G = K_2$ and trivially $G$ has a DLDS-partition (where we add one of the vertices to the dominating set $D$ and the other vertex to the locating dominating set $L$, as illustrated in Figure~\ref{fig:n3n4}(a)). If $n = 3$, then either $G = P_3$ or $G = K_3$. In both cases, the graph $G$ is a DLDS-graph, as illustrated in Figure~\ref{fig:n3n4}(b) and~\ref{fig:n3n4}(c) where the shaded vertices form a locating dominating set $L$ and the white vertices a dominating set $D$ resulting in a DLDS-partition $\{D,L\}$. This establishes the base cases. Let $n \ge 4$ and assume that every isolate-free graph of order less than~$n$ is a DLDS-graph.

\begin{figure}[htb]
\begin{center}
\begin{tikzpicture}[scale=.7,style=thick,x=1cm,y=1cm]
\def\vr{2.75pt}
\path (-3,0.5) coordinate (t1);
\path (-2,0.5) coordinate (t2);
\draw (t1)--(t2);
\path (0,0.5) coordinate (u1);
\path (1,0.5) coordinate (u2);
\path (2,0.5) coordinate (u3);
\draw (u1)--(u2)--(u3);
\path (4,0) coordinate (v1);
\path (5,1) coordinate (v2);
\path (6,0) coordinate (v3);
\draw (v1)--(v2)--(v3)--(v1);
\path (8,0.5) coordinate (w1);
\path (9,0.5) coordinate (w2);
\path (10,0.5) coordinate (w3);
\path (11,0.5) coordinate (w4);
\draw (w1)--(w2)--(w3)--(w4);
\path (13,0.5) coordinate (x1);
\path (14,0.5) coordinate (x2);
\path (15,0.5) coordinate (x3);
\path (16,0.5) coordinate (x4);
\draw (x1)--(x2)--(x3)--(x4);
\draw (x1) to[out=270,in=270, distance=0.75cm ] (x4);
%
\draw (t1) [fill=black] circle (\vr);
\draw (t2) [fill=white] circle (\vr);
\draw (u1) [fill=black] circle (\vr);
\draw (u2) [fill=white] circle (\vr);
\draw (u3) [fill=black] circle (\vr);
\draw (v1) [fill=black] circle (\vr);
\draw (v2) [fill=white] circle (\vr);
\draw (v3) [fill=black] circle (\vr);
\draw (w1) [fill=black] circle (\vr);
\draw (w2) [fill=white] circle (\vr);
\draw (w3) [fill=white] circle (\vr);
\draw (w4) [fill=black] circle (\vr);
\draw (x1) [fill=black] circle (\vr);
\draw (x2) [fill=white] circle (\vr);
\draw (x3) [fill=white] circle (\vr);
\draw (x4) [fill=black] circle (\vr);
%
\draw (-2.5,-1) node {{\small (a) $K_2$}};
\draw (1,-1) node {{\small (b) $P_3$}};
\draw (5,-1) node {{\small (c) $C_3$}};
\draw (9.5,-1) node {{\small (d) $P_4$}};
\draw (14.5,-1) node {{\small (e) $C_4$}};
\draw[anchor = south] (t1) node {{\small $L$}};
\draw[anchor = south] (t2) node {{\small $D$}};
\draw[anchor = south] (u1) node {{\small $L$}};
\draw[anchor = south] (u2) node {{\small $D$}};
\draw[anchor = south] (u3) node {{\small $L$}};
\draw[anchor = east] (v1) node {{\small $L$}};
\draw[anchor = south] (v2) node {{\small $D$}};
\draw[anchor = west] (v3) node {{\small $L$}};
\draw[anchor = south] (w1) node {{\small $L$}};
\draw[anchor = south] (w2) node {{\small $D$}};
\draw[anchor = south] (w3) node {{\small $D$}};
\draw[anchor = south] (w4) node {{\small $L$}};
\draw[anchor = south] (x1) node {{\small $L$}};
\draw[anchor = south] (x2) node {{\small $D$}};
\draw[anchor = south] (x3) node {{\small $D$}};
\draw[anchor = south] (x4) node {{\small $L$}};
\end{tikzpicture}
\caption{DLDS-partitions of graphs of small order}
\label{fig:n3n4}
\end{center}
\end{figure}

We note that an isolate-free graph is a DLDS-graph if and only if every component of the graph is a DLDS-graph. If $G$ is disconnected, then applying the inductive hypothesis to every component of $G$ we infer that $G$ is a DLDS-graph. Hence we may assume that $G$ is connected. We proceed further with the following series of claims that establish structural properties of the graph $G$.

\begin{claim}
\label{claim0}
If $G$ has a dominating vertex, then $G$ is a DLDS-graph.
\end{claim}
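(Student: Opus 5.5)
We take the dominating vertex itself as the dominating set and put every other vertex into the locating dominating set. So let $v$ be a dominating vertex of $G$, and set $D = \{v\}$ and $L = V(G) \setminus \{v\}$. Clearly $\{D,L\}$ is a partition of $V(G)$, and both parts are nonempty because $n \ge 4$. It then suffices to check that $D$ is a dominating set and $L$ is a locating dominating set of $G$.

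First, $D$ is a dominating set. Since $v$ is adjacent to every other vertex of $G$, every vertex of $V(G) \setminus D$ has the neighbor $v$ in $D$.

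Second, $L$ is a dominating set. The only vertex outside $L$ is $v$. Since $n \ge 4$, the set $L$ is nonempty and every vertex of $L$ is adjacent to $v$. Hence $v$ has a neighbor in $L$.

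Third, $L$ is a locating set. Again, $V(G) \setminus L = \{v\}$ consists of a single vertex, so there is no pair of distinct vertices outside $L$ to locate. The location condition therefore holds vacuously, and $L$ is a locating dominating set of $G$.

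Thus $\{D,L\}$ is a DLDS-partition of $G$, so $G$ is a DLDS-graph. There is no real obstacle here: the only points to check are that $L$ is nonempty, which follows from $n \ge 4$ (indeed $n \ge 2$ suffices), and that the location condition is vacuous because only one vertex lies outside $L$. The inductive hypothesis is not needed. This is the construction that achieves equality $\gamma(G) + \gL(G) = n$ for $K_{1,n-1}$ and $K_n$, as noted earlier.
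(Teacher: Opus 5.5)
Your proof is correct and is exactly the paper's argument: take $D=\{v\}$ for a dominating vertex $v$ and $L=V(G)\setminus\{v\}$. Then $D$ dominates trivially, and the location condition for $L$ is vacuous because only $v$ lies outside $L$. Your explicit checks, that $L$ is nonempty and dominates $v$, just spell out details the paper leaves implicit.
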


\begin{proof}
Suppose that $G$ has a dominating vertex~$v$, and so $\deg_G(v) = n-1$. In this case, we let $D = \{v\}$ be the singleton set consisting of the dominating vertex~$v$ and we let $L = V(G) \setminus \{v\}$. The resulting set $\{D,L\}$ is a DLDS-partition of $G$, implying that $G$ is a DLDS-graph.
\end{proof}

By Claim~\ref{claim0}, we may assume that $G$ has no dominating vertex, for otherwise $G$ is a DLDS-graph as desired. Thus, $\Delta(G) \le n-2$. With this assumption, if $n = 4$, then $G = P_4$ or $G = C_4$. In both cases, we let $D$ be a set consisting of two adjacent vertices of degree~$2$ and we let $L$ denote the remaining two vertices of $G$ as illustrated in Figure~\ref{fig:n3n4}(d) and~\ref{fig:n3n4}(e). The resulting set $\{D,L\}$ is a DLDS-partition of $G$. (We remark that the DLDS-partition in the above examples are not necessarily unique.) Hence, we may assume that $n \ge 5$, for otherwise the desired result is immediate.

\begin{claim}
\label{claim1}
If $\delta(G) = 1$, then $G$ is a DLDS-graph.
\end{claim}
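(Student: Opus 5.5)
The plan is to remove a single leaf and apply the inductive hypothesis to what remains. Let $v$ be a leaf of $G$ and $u$ its support vertex. Since $G$ is connected with $n \ge 5$, the graph $G' = G - v$ is connected, isolate-free and of order $n-1 \ge 4$. Hence, by the inductive hypothesis, $G'$ has a DLDS-partition $\{D',L'\}$. We then try to extend it by placing $v$ into one of the two sets, and split into cases according to whether $u \in D'$ or $u \in L'$.

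\emph{Case $u \in D'$.} Set $D = D'$ and $L = L' \cup \{v\}$. Then $D$ dominates $v$ through $u$ and still dominates $V(G')$. The set $L$ dominates every vertex of $V(G) \setminus L = D'$, because $L'$ already did. For locating, the only vertex outside $L$ whose trace on $L$ changes is $u$, which gains $v$. Since $v$ is adjacent to no other vertex, $u$'s new trace differs from every other trace, which are unchanged. So $\{D,L\}$ is a DLDS-partition.

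\emph{Case $u \in L'$.} Set $L = L'$ and $D = D' \cup \{v\}$. Here $D$ is dominating, and $L$ dominates $v$ through $u$. The only possible failure is locating: $N_G(v) \cap L = \{u\}$ might coincide with the trace of some $w \in D'$. Because $L'$ locates $D'$, there is at most one such $w$, and it satisfies $N_G(w) \cap L' = \{u\}$. If no such $w$ exists, we are done. This subcase, where $w$ is adjacent to $u$ and to no other vertex of $L'$, is the main obstacle.

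To handle this bad subcase I would first try local re-arrangements around $u$, $v$, $w$:
\begin{enumerate}
\item[(a)] Move $w$ into $L$, taking $L = L' \cup \{w\}$ and $D = (D' \setminus \{w\}) \cup \{v\}$. The trace of $v$ on $L$ is still $\{u\}$, while any other vertex with trace $\{u\}$ in $L'$ was $w$ itself, and vertices adjacent to $w$ gain $w$ in their trace. What must be checked is that $D$ still dominates $w$ and every vertex that only $w$ dominated.
\item[(b)] Swap $u$ and $v$, taking $D = D' \cup \{u\}$ and $L = (L' \setminus \{u\}) \cup \{v\}$. This threatens domination of $w$ by $L$, since $w$'s only $L'$-neighbour was $u$.
\end{enumerate}
If neither works directly, I would strengthen the induction choice. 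Among all DLDS-partitions of $G'$, choose one with $|D'|$ minimum, or choose $D'$ minimal as a dominating set. Minimality should then give $w$ a private neighbour in $D'$ or some other leverage that makes (a) valid. Alternatively, I would change the reduction: delete $u$ together with all of its leaf neighbours, or delete a leaf $v$ whose support vertex is chosen extremal (e.g.\ a support vertex farthest from a fixed root, or one with the fewest leaves). The aim is to guarantee that the vertex $w$ with $N(w) \cap L' = \{u\}$ cannot exist, or can itself be recoloured freely.

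If $u$ has at least two leaf neighbours, those leaves are open twins. In that case I would remove all but one of them, apply the above argument, and then put every removed leaf into $L$. This is harmless, because each such leaf is dominated by $u$ and lies in $L$, so it needs no locating.
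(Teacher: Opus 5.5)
The proposal has a genuine gap. Your case $u\in D'$ is fine, but the subcase you flag ($u\in L'$ and some $w\in D'$ with $N_G(w)\cap L'=\{u\}$) is never resolved: you list repairs but carry none of them through.

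\begin{itemize}
\item Repair (a) works whenever $w$ has a neighbour in $D'$. In that case $u$ is dominated by $v$, no other vertex of $L'$ is adjacent to $w$, and the locating checks go through. It fails exactly when $N_G(w)=\{u\}$, i.e.\ when $w$ is a second leaf at $u$.
\item Repair (b) always leaves $w$ without a neighbour in $L$.
\end{itemize}

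This residual case really occurs. Take the tree with edges $uv,uw,ua,ab$; it is connected, has $n=5$, and has no dominating vertex. Then $G-v$ is the path $wuab$, and $\{\{w,a\},\{u,b\}\}$ is a DLDS-partition of it. Here $\{w,a\}$ is both a minimal and a minimum dominating set. So neither your direct extension, nor (a), nor (b), nor choosing $D'$ minimal or of minimum size excludes it.

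Your final paragraph does not rescue this either. Putting the deleted leaves into $L$ needs $u\in D$, which nothing guarantees: your case $u\in L'$ and repair (a) both keep $u\in L$. In the same tree, delete $w$ and then $v$. This leaves the path $uab$, whose only DLDS-partition is $\{\{a\},\{u,b\}\}$. Your extension yields $\{\{a,v\},\{u,b\}\}$, and then $w$ fits in neither set. In $L$ it has no neighbour in $D$; in $D$ it has the same trace $\{u\}$ as $v$.

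The missing idea is the reduction you mention only in passing: delete the support vertex together with all its leaves, and force the support vertex into $D$. Let $L_u$ be the set of leaf neighbours of $u$, and let $G'=G-(L_u\cup\{u\})$.
\begin{itemize}
\item $G'$ is nonempty because $u$ is not a dominating vertex.
\item $G'$ is isolate-free, because a vertex whose neighbours all lie in $L_u\cup\{u\}$ would itself be a leaf at $u$.
\end{itemize}
Take a DLDS-partition $\{D',L'\}$ of $G'$ from the inductive hypothesis, and set $D=D'\cup\{u\}$ and $L=L'\cup L_u$. Then:
\begin{itemize}
\item $u$ dominates $L_u$ and is dominated by it.
\item No vertex of $D'$ is adjacent to $L_u$, so the traces on $L$ of vertices of $D'$ are their distinct nonempty traces on $L'$.
\item $u$ is the only vertex of $D$ whose trace meets $L_u$.
\end{itemize}
This is the paper's proof, and it makes your case analysis on the position of $u$ unnecessary.
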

\begin{proof}
Suppose that $\delta(G) = 1$. Let $v$ be a support vertex of $G$, and let $L_v$ be the set of leaf neighbors of $v$ in $G$. We now consider the graph $G' = G - (L_v \cup \{v\})$. By our earlier assumptions, the graph $G$ has no dominating vertex. The graph $G'$ is therefore an isolate-free graph. Applying the inductive hypothesis to $G'$, there is a DLDS-partition $\{D',L'\}$ of $V(G')$ into a dominating set $D'$ and a locating dominating set $L'$ of $G'$. As the vertex~$v \in D$ is located by the subset $L_v \subset L$, letting $D = D' \cup \{v\}$ and $L = L' \cup L_v$, we infer that $\{D,L\}$ is a DLDS-partition of $G$. 
\end{proof}

By Claim~\ref{claim1}, we may assume that $\delta(G) \ge 2$, for otherwise $G$ is a DLDS-graph, as desired.

\begin{claim}
\label{claim2}
If $G$ has open twins, then $G$ is a DLDS-graph.
\end{claim}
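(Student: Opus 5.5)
The plan is to remove one vertex of an open-twin pair, apply the induction hypothesis, and reinsert it. Let $u$ and $v$ be open twins in $G$, so $N_G(u)=N_G(v)$ and in particular $uv\notin E(G)$. Set $G'=G-v$. Every vertex of $G'$ keeps at least one neighbour: every vertex of $G$ has degree at least~$2$, and any neighbour of $v$ is also adjacent to $u\in V(G')$. Hence $G'$ is isolate-free of order $n-1$, and the inductive hypothesis gives a DLDS-partition $\{D',L'\}$ of $G'$. The argument then splits on which side contains $u$, and I will try to place $v$ on the side opposite to the one that causes trouble.

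\emph{Case $u\in L'$.} Put $D=D'$ and $L=L'\cup\{v\}$.
\begin{enumerate}
\item $D$ dominates $G$: since $u\notin D'$, $u$ has a neighbour in $D'$, and that vertex is also adjacent to $v$.
\item $L$ dominates $G$: $L'$ did, and $L$ only grows.
\item $L$ locates the vertices outside it, which are exactly those of $D'$. For $x\in D'$ the new trace is $N(x)\cap L=(N(x)\cap L')\cup\{v : u\in N(x)\cap L'\}$. This is an injective function of the old trace $N(x)\cap L'$, so distinct old traces remain distinct.
\end{enumerate}
This case is routine.

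\emph{Case $u\in D'$ with $N(u)\cap D'\neq\emptyset$.} Again put $v$ into $L$.
\begin{enumerate}
\item $D=D'$ dominates $v$ through a $D'$-neighbour of $u$.
\item The traces of vertices of $D'$ change exactly as in the previous case, i.e.\ injectively, so $L$ still locates and dominates.
\end{enumerate}
Note that putting $v$ into $D$ instead never works here: $u$ and $v$ would both lie outside $L$ with identical traces.

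\emph{Case $u\in D'$ and $N(u)\subseteq L'$.} This is the obstacle. Neither placement of $v$ works directly: $v$ in $L$ leaves $v$ undominated by $D$, and $v$ in $D$ leaves $u,v$ unlocated. I would first try to avoid the case by choosing the twin pair and the deleted twin cleverly. For instance, take a maximal class $T$ of pairwise open twins with $|T|\ge 2$. Among the choices of which $t\in T$ to delete, and among the partitions given by induction, we would like $T\setminus\{t\}$ to meet $L'$; if it does, we are in the first case. If $T\setminus\{t\}\subseteq D'$ with all neighbours in $L'$, I would swap. Pick $w\in N(u)\subseteq L'$ (possible since $\deg(u)\ge 2$) and set $D=D'\cup\{w\}$ and $L=(L'\setminus\{w\})\cup\{v\}$.
\begin{enumerate}
\item Now $w$ dominates $v$ in $D$.
\item $v\in L$ dominates $w$ in $L$.
\item Every other vertex of $N(u)$ remains in $L$ and is still dominated by $D$.
\end{enumerate}
What must be verified is the locating condition. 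The trace of each $x\in D'$ is modified by deleting $w$ and adding $v$ exactly when $x\sim u$, that is, when $x\sim v$. We must also check that $w$ itself gets a trace distinct from all others; it contains $v$. I expect collisions here to be the main difficulty. To handle them I would choose $w$ among $N(u)$ so that removing it from $L$ does not merge two traces. The argument would use that $w$ and $v$ are both adjacent to $u$, and that a pair $x,y\in D'$ separated only by $w$ must also be separated by $v$ unless $x$ and $y$ are both adjacent or both non-adjacent to $u$. If no single $w$ works, the fallback is to delete the entire twin class except $u$, or $u$ together with a neighbour, and apply induction to that smaller graph instead.
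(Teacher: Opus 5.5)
Your first two cases are fine. In the second, the justification is not injectivity in the old trace, because whether $v$ joins the trace of $x\in D'$ now depends on whether $x\sim u$ with $u\in D'$. It is simply that the new trace restricted to $L'$ is the old one.

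\textbf{The gap is the third case}, $u\in D'$ and $N(u)\subseteq L'$. This case carries all the difficulty, and you leave it open. Here no vertex of $D'$ is adjacent to $u$, hence none is adjacent to $v$. So after your swap $D=D'\cup\{w\}$, $L=(L'\setminus\{w\})\cup\{v\}$, the trace of each $x\in D'$ is just $(N(x)\cap L')\setminus\{w\}$, and $v$ separates nothing. The swap therefore works only if some $w\in N(u)$ is redundant in $L'$: no $x\in D'$ has trace $\{w\}$, and no two traces differ only in $w$. You also never checked that $L$ still dominates $D'$.

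The swap fails for every $w$ in the following example. Take the $5$-cycle with vertices $u,a,x_1,x_2,b$ in this cyclic order, and add a vertex $v$ adjacent to $a$ and $b$. Then:
\begin{itemize}
\item $\delta(G)=2$, there is no dominating vertex, and $u,v$ is the only pair of open twins;
\item $G-v\cong C_5$, and $D'=\{u,x_1,x_2\}$, $L'=\{a,b\}$ is a DLDS-partition of $G-v$ falling into your third case;
\item with $w=a$ the vertex $x_1$ has no neighbour in $L$, and with $w=b$ the vertex $x_2$ has none.
\end{itemize}
Deleting $u$ instead of $v$ gives the symmetric situation. The induction hypothesis only hands you \emph{some} partition of $G'$, so you cannot ``choose among the partitions given by induction'' to avoid this.

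\textbf{The missing idea} is the one you mention only as an unanalysed fallback: delete a twin \emph{together with a common neighbour}. Let $X$ be a class of mutual open twins with common neighbourhood $Y$, and delete some $x\in X$ and some $y\in Y$. Then $G-\{x,y\}$ is isolate-free, using $|X|,|Y|\ge 2$ and $\delta(G)\ge 2$, and the extra vertex gives exactly the freedom you lack. Put $X'=X\setminus\{x\}$ and $Y'=Y\setminus\{y\}$.
\begin{itemize}
\item If $D'$ meets both $X'$ and $Y'$, put $x$ and $y$ into $L$.
\item If $Y'\subseteq L'$, then $X'\subseteq D'$, and $D=D'\cup\{y\}$, $L=L'\cup\{x\}$ works because $y$ is the only vertex of $D$ adjacent to $x$.
\item If $X'\subseteq L'$, then $D'\cap Y'\neq\emptyset$. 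Put both $x,y$ into $L$ when $y$ has a neighbour in $D'$. Otherwise take $D=D'\cup\{x\}$, $L=L'\cup\{y\}$, where $x$ is the only vertex of $D$ adjacent to $y$.
\end{itemize}
Incidentally, your one-vertex deletion does succeed when the twin class has at least three vertices, since two surviving twins cannot both lie in $D'$. The genuine obstruction is a twin class of size two, as in the example.
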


\begin{proof}
Suppose that $G$ has open twins, and consider a set $X$ of (mutual) open twins in $G$ where each vertex in $X$ has the same open neighborhood, say $Y$. Thus the set $X$ is an independent set, $|X| \ge 2$, and $N_G(x) = Y$ for every vertex $x \in X$. Let $|X| = r \ge 2$ and let $|Y| = s$. Since $\delta(G) \ge 2$, we note that $s \ge 2$. Let $x$ be an arbitrary vertex in $X$ and let $y$ be an arbitrary vertex in~$Y$.

We now consider the graph $G' = G - \{x,y\}$. If $G'$ contains an isolated vertex~$v$, then since the edges between $X$ and $Y$ in $G$ induce a complete bipartite graph $K_{r,s}$ where $r, s \ge 2$, we infer that $v$ is a vertex of degree~$1$ in $G$ with~$y$ as its unique neighbor, contradicting our earlier assumption that $\delta(G) \ge 2$. Hence, $G'$ is an isolate-free graph. Applying the inductive hypothesis to the isolate-free graph $G'$, there is a DLDS-partition $\{D',L'\}$ of $V(G')$ into a dominating set $D'$ and a locating dominating set $L'$ of $G'$.

Let $X' = X \setminus \{x\}$ and let $Y' = Y \setminus \{y\}$. If $D' \cap X' \ne \emptyset$ and $D' \cap Y' \ne \emptyset$, then letting $D = D'$ and $L = L' \cup \{x,y\}$, we infer that $\{D,L\}$ is a DLDS-partition of $G$. Hence we may assume that $D' \cap X' = \emptyset$ or $D' \cap Y' = \emptyset$. Suppose that $D' \cap Y' = \emptyset$, and so $Y' \subseteq L'$. Since $D'$ is a dominating set of $G'$ and $X'$ is an independent set, in order to dominate the vertices in $X'$ we note that $X' \subseteq D'$. In this case, letting $D = D' \cup \{y\}$ and $L = L' \cup \{x\}$, we infer that $\{D,L\}$ is a DLDS-partition of $G$ noting that the vertex $x \in L$ locates the vertex~$y \in D$ since $y$ is the only vertex in $D$ that is adjacent to~$x$.

Hence, we may assume that $D' \cap X' = \emptyset$, for otherwise the desired result holds. Thus, $X' \subseteq L'$. Since $D'$ is a dominating set of $G'$, in order to dominate the vertices in $X'$ we note that $D' \cap Y' \ne \emptyset$. In particular, we note that the vertex~$x$ is adjacent in $G$ to a vertex of $D'$. Suppose that $y$ is adjacent in $G$ to at least one vertex that belongs to the set $D'$. In this case, we let $D = D'$ and $L = L' \cup \{x,y\}$. The resulting partition $\{D,L\}$ is a DLDS-partition of $G$, yielding the desired result.

Hence we may assume that $y$ is adjacent in $G$ to no vertex that belongs to the set $D'$, that is, all neighbors of $y$ in $G$ different from~$x$ belong to the set $L'$. In this case, we let $D = D' \cup \{x\}$ and $L = L' \cup \{y\}$. We note that all vertices in $D'$ are located by the set $L'$. Further, each vertex in $D' \cap Y'$ is adjacent to all vertices in $X'$, where we recall that $X' \subseteq L'$. Since the vertex~$y \in L$ has no neighbor in the set $D$ except for the vertex~$x$, the vertex~$x \in D$ is located by the vertex~$y \in L$ in the set $L$. We therefore infer that the partition $\{D,L\}$ is a DLDS-partition of $G$, once again yielding the desired result that $G$ is a DLDS-graph.
\end{proof}

Notice that the previous proof gives us a way to find a desired partition when there are open twins in a graph which play a special role in locating dominating sets. By Claim~\ref{claim2}, we may also assume that $G$ has no open twins, for otherwise $G$ is a DLDS-graph as desired. Let $C$ be a maximal clique in $G$, and so $C$ is a set of vertices in $G$ that induce a complete graph and no superset of $C$ is a clique. By our earlier assumption, the graph $G$ has no dominating vertex, implying that $C \ne V(G)$. Let $I$ be the set of isolated vertices in $G - C$. Since $G$ is isolate-free, we note that every vertex in $I$ has all its neighbors in $C$, that is,  $N_G(u) \subseteq C$ for all $u \in I$. Moreover since $G$ has no open twins, we note that if $u$ and $v$ are two distinct vertices in $I$, then $N_G(u) \cap C \ne N_G(v) \cap C$.

\begin{claim}
\label{claim3}
If $V(G) = C \cup I$, then $G$ is a DLDS-graph.
\end{claim}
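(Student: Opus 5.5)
The plan is to write down an explicit partition: $I$ goes into the dominating side and $C$ into the locating dominating side, with a one-vertex correction when some vertex of $C$ has no neighbour in $I$. Throughout we use the standing assumptions: $G$ is connected of order $n \ge 5$, has no dominating vertex, has $\delta(G) \ge 2$, and has no open twins. In the present setting $V(G) = C \cup I$, where $C$ is a maximal clique, $I$ is independent, $N_G(u) \subseteq C$ for each $u \in I$, and distinct vertices of $I$ have distinct neighbourhoods in $C$. Since $C \ne V(G)$, we have $I \ne \emptyset$. Let $C_0 = \{c \in C : N_G(c) \cap I = \emptyset\}$ be the set of clique vertices with no neighbour in $I$.

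\emph{Case $C_0 = \emptyset$.} Take $D = I$ and $L = C$.
\begin{enumerate}
\item $D$ is dominating: every vertex of $C$ has a neighbour in $I$, and $I \subseteq D$.
\item $L$ dominates $D$: each $u \in I$ has $N_G(u) \subseteq C$ and is not isolated, so it has a neighbour in $L$.
\item $L$ locates $D$: for $u \in I$ we have $N_G(u) \cap L = N_G(u)$, and these sets are pairwise distinct because $G$ has no open twins.
\end{enumerate}
So $\{D,L\}$ is a DLDS-partition.

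\emph{Case $C_0 \ne \emptyset$.} Fix $c_0 \in C_0$ and take $D = I \cup \{c_0\}$ and $L = C \setminus \{c_0\}$.
\begin{enumerate}
\item $D$ is dominating: $c_0$ dominates the clique $C$, and $I \subseteq D$.
\item $L$ dominates $D$: $c_0$ has a neighbour in $L$ because $\delta(G) \ge 2$ forces $|C| \ge 2$. Each $u \in I$ has a neighbour in $L$ because $c_0 \notin N_G(u)$, so $N_G(u) \subseteq L$ and $N_G(u)\neq\emptyset$.
\item Two vertices of $I$ are located by $L$: their traces on $L$ are their full neighbourhoods, which are distinct.
\end{enumerate}

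The main obstacle is locating $c_0$ against the vertices of $I$. Since $c_0 \in C_0$, its neighbourhood is exactly $C \setminus \{c_0\}$, so its trace on $L$ is all of $L$. A vertex $u \in I$ could only clash with $c_0$ if $N_G(u) \cap L = L$. As $u$ is not adjacent to $c_0$, this would mean $N_G(u) = C \setminus \{c_0\} = N_G(c_0)$, making $u$ and $c_0$ open twins. That contradicts Claim~\ref{claim2}'s reduction, so no clash occurs and $\{D,L\}$ is a DLDS-partition.
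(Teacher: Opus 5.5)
Your proof is correct, but it uses essentially the reverse of the paper's partition.

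\textbf{The paper's route.} The paper takes $D$ to be a minimum (or merely minimal) subset of $C$ that dominates $I$, and sets $L = V(G)\setminus D$, so all of $I$ lies in $L$. Domination of $G$ by $D$ is automatic, since $D$ is a nonempty subset of the clique and dominates $I$ by construction. Each $v \in D$ is both dominated and located by a private neighbour $v' \in I$ with $N_G(v') \cap D = \{v\}$. This argument needs only isolate-freeness and the split structure: neither the no-open-twins reduction nor $\delta(G) \ge 2$ is used, and the remark $|D| \ge 2$ is incidental. It is also exactly the private-neighbour mechanism reused in Claim~\ref{claim4}, where $D_C$ is glued onto the inductive partition $\{D',L'\}$ of $G'$.

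\textbf{Your route.} You put $I$ on the dominating side, together with one vertex $c_0 \in C$ having no neighbour in $I$ if such a vertex exists, and put the rest of the clique on the locating side. This makes the no-open-twins reduction of Claim~\ref{claim2} essential, in two places: to separate vertices of $I$ from one another, and to rule out a vertex $u \in I$ with $N_G(u) = C \setminus \{c_0\}$ clashing with $c_0$. In exchange, your partition is completely explicit and needs no minimality computation.

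\textbf{A small remark.} To show $c_0$ has a neighbour in $L$, isolate-freeness already suffices, since $N_G(c_0) = C \setminus \{c_0\} = L$. You do not need $\delta(G) \ge 2$ there.
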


\begin{proof}
Suppose that $V(G) = C \cup I$. Let $D$ be a subset of vertices in the clique $C$ of minimum cardinality that dominates the set $I$. Thus, every vertex in $I$ is adjacent in $G$ to at least one vertex in $D$ and no proper subset of $D$ dominates the set $I$ in the graph $G$. Since $G$ has no dominating vertex, we infer that $|D| \ge 2$. By our choice of the set $D$, if $v$ is an arbitrary vertex in $D$, then $D \setminus \{v\}$ does not dominate the set $I$, implying that there is at least one vertex $v' \in I$ such that $N_G(v') \cap D = \{v\}$. Letting $L = V(G) \setminus D$, the partition $\{D,L\}$ is a DLDS-partition of $G$ by our earlier observation that each vertex in $D$ dominates at least one vertex in $I \subseteq L$ that is not dominated by any other vertex of $D$, yielding the desired result.
\end{proof}

By Claim~\ref{claim3}, we may assume that $C \cup I$ is a proper subset of $V(G)$, for otherwise $G$ is a DLDS-graph as desired. We now consider the graph $G' = G - (C \cup I)$. By definition of the set $I$ we note that the graph $G'$ is isolate-free. Applying the inductive hypothesis to $G'$, there is a DLDS-partition $\{D',L'\}$ of $V(G')$ into a dominating set $D'$ and a locating dominating set $L'$ of $G'$.

\begin{claim}
\label{claim4}
If $I \ne \emptyset$, then $G$ is a DLDS-graph.
\end{claim}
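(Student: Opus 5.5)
We keep the DLDS-partition $\{D',L'\}$ of $G' = G - (C \cup I)$ given by the inductive hypothesis and extend it over $C \cup I$, following the construction of Claim~\ref{claim3}. Let $D_C \subseteq C$ be a subset of the clique $C$ of minimum cardinality that dominates $I$. It exists because every vertex of $I$ has all its neighbors in $C$ and is not isolated in $G$. Since $I \ne \emptyset$, we have $D_C \ne \emptyset$. By minimality, every $v \in D_C$ has a private neighbor $v' \in I$ with $N_G(v') \cap D_C = \{v\}$. We define
\[
D = D' \cup D_C \quad\text{and}\quad L = L' \cup (C \setminus D_C) \cup I ,
\]
and check that $\{D,L\}$ is a DLDS-partition of $G$.

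\emph{$D$ dominates $G$.} Vertices of $G'$ are dominated by $D'$. Vertices of $I$ are dominated by $D_C$ by construction. Vertices of $C$ are dominated because $C$ is a clique containing the nonempty set $D_C$.

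\emph{$L$ dominates $G$.} Vertices of $L$ dominate themselves. A vertex of $D'$ has a neighbor in $L'$, since $L'$ is a dominating set of $G'$. A vertex $v \in D_C$ is adjacent to its private neighbor $v' \in I \subseteq L$.

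\emph{$L$ locates every pair of vertices of $D$.} We use the key structural fact that no vertex of $D'$ has a neighbor in $I$, since $N_G(u) \subseteq C$ for all $u \in I$. We split into three cases.
\begin{itemize}
\item Two vertices $u,w \in D'$. Their traces on $L$ are $(N_G(u)\cap L') \cup (N_G(u) \cap (C\setminus D_C))$, and similarly for $w$. Since $L' \subseteq V(G')$ is disjoint from $C$, and $N_G(u)\cap L' \ne N_G(w)\cap L'$ because $L'$ locates $D'$ in $G'$, the traces on $L$ differ.
\item A vertex $u \in D'$ and a vertex $v \in D_C$. These are separated by $v' \in I$, which is adjacent to $v$ but not to $u$.
\item Two vertices $v,w \in D_C$. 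These are separated by $v'$, since $N_G(v') \cap D_C = \{v\}$.
\end{itemize}
Hence $L$ is a locating dominating set and $D$ is a dominating set, so $G$ is a DLDS-graph.

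The step that needs the most care is the first case of the locating check. Vertices of $D'$ acquire new $L$-neighbors in $C \setminus D_C$, and we must confirm that this cannot merge two previously distinct traces. The disjointness of $C$ from $L'$ settles it, because adding neighbors from a disjoint set preserves the difference between the $L'$-parts. The hypothesis $I \ne \emptyset$ is used exactly to guarantee $D_C \ne \emptyset$, so that $C$ is dominated by $D$, and to provide the private neighbors that dominate and locate the vertices of $D_C$.
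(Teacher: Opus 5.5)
Your proof is correct and follows the paper's argument: the same minimum set $D_C \subseteq C$ dominating $I$, the same partition $D = D' \cup D_C$, $L = V(G)\setminus D$, and the same use of private neighbors in $I$ to dominate and locate the vertices of $D_C$. Your explicit checks, that $D$ dominates $C$ because $D_C \neq \emptyset$ and that vertices of $D'$ stay located despite gaining neighbors in $C \setminus D_C$ (because $C$ is disjoint from $L'$ and no vertex of $D'$ is adjacent to $I$), fill in steps the paper leaves implicit.
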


\begin{proof}
Suppose that $I \ne \emptyset$. Let $D_C$ be a subset of vertices in the clique $C$ of minimum cardinality that dominates the set $I$ in $G$. We note that possibly $|D_C| = 1$. We now let $D = D' \cup D_C$ and we let $L = V(G) \setminus D$. We note that $L' \subset L$ and $I \subset L$. Further, we note as in the proof of Claim~\ref{claim3} that by the minimality of the set $D_C$, each vertex in $D_C$ dominates at least one vertex in $I$ that is not dominated by any other vertex of $D_C$. Thus, every vertex in $D_C$ is located by the set $L$. We therefore infer that the partition $\{D,L\}$ is a DLDS-partition of $G$, yielding the desired result.
\end{proof}

By Claim~\ref{claim4}, we may assume that $I = \emptyset$, for otherwise $G$ is a DLDS-graph as desired. Suppose that every vertex in the clique $C$ is adjacent in $G$ to at least one vertex that belongs to the set $D'$. In this case, we let $D = D'$ and $L = L' \cup C$, and we infer that $\{D,L\}$ is a DLDS-partition of $G$. Hence we may assume that at least one vertex, say~$v$, in the clique $C$ has no neighbor in $G$ that belongs to the set $D'$. Thus, $N_G(v) \cap V(G') \subseteq L'$. Let $w$ be an arbitrary vertex in $C \setminus \{v\}$, and let $D = D' \cup \{w\}$ and $L = (L' \cup C) \setminus \{w\}$. In particular, we note that $v \in L$ and every neighbor of $v$ in $G$ belongs to the set $L$, except for the vertex~$w$. The vertex $w$ is therefore located by the vertex~$v \in L$, and we infer that the partition $\{D,L\}$ is a DLDS-partition of $G$. This completes the proof of Theorem~\ref{thm:main}.

\section{Polynomial-time algorithm to construct a DLDS-partition} \label{sec:consequences}

It can be verified that the proofs of Claims~\ref{claim3} and~\ref{claim4} in Section~\ref{S:main-proof} hold even if we replace ``minimum dominating set'' in these proofs by ``\emph{minimal} dominating set'', and the latter can be found in polynomial-time in the order of the input graph. This implies that our proof of Theorem~\ref{thm:main} also provides a polynomial-time algorithm, say $\calA$, to construct a DLDS-partition of any isolate-free graph $G$. To see this, we consider the following six cases depending on $G$ and using the assumptions of Claims~\ref{claim0}, \ref{claim1}, \ref{claim2} and~\ref{claim3}.
\begin{description}
\item[Case 0]: $G$ is one of the graphs of the base cases of the induction.
\item[Case 1]: $G$ has a dominating vertex.
\item[Case 2]: $\delta(G) = 1$.
\item[Case 3]: $G$ has open twins.
\item[Case 4]: $V(G) = C \cup I$, where $C$ is a maximal clique in $G$ and $I$ is the set of isolated vertices in $G - C$. In other words, $G$ is a split graph.
\item[Case 5]: None of the above cases hold.
\end{description}
Notice that each of the above cases can be verified in time $n^{O(1)}$, where $n = |V(G)|$. Moreover, we call $G$ ``terminating'' if either of the following three conditions is true: (i) Case 0 holds; (ii) Case~1 holds; and (iii) Case~4 holds but Cases~2 and~3 do not hold. 

We now describe the algorithm $\calA$ recursively the following way. The terminating condition for $\calA$ is when the graph $G$ itself is terminating. In such a case, $\calA$ constructs a DLDS-partition of $G$ in either constant time (if $G$ is terminating due to Case 0) or in time $n^{O(1)}$ by the proof of either Claim~\ref{claim0} or~\ref{claim3} (if $G$ is terminating due to either Case 1 or Case 4, respectively). On the other hand, if $G$ is not terminating, then $\calA$ considers whichever of the Cases~2,~3 and~5 holds with the smallest case number and constructs a smaller ``inductive'' graph $G'$ by the proof of its corresponding claim, or, for Case~5, by the paragraph following Claim~\ref{claim3}. Then $\calA$ calls itself recursively on $G'$ and proceeds until it terminates. Moreover, for each such recursive call on $G'$ resulting from one of the Cases~2,~3 and~5, notice that a DLDS-partition of the precedent graph $G$ is constructed in time $n^{O(1)}$ by the proof of the corresponding claim or, for Case~5, either by the proof of Claim~\ref{claim4} or by the last paragraph of Section~\ref{S:main-proof}. Since each recursive call of $\calA$ on a graph $G'$ happens for the smallest case number of all the cases that hold for $G'$, the total number of recursive calls during its execution is $O(n)$. This implies that $\calA$ constructs a DLDS-partition of $G$ in polynomial-time in $n$.

\subsection*{Acknowledgements}

This research was supported by the French government IDEX-ISITE initiative 16-IDEX-0001 (CAP 20-25), International Research Center “Innovation Transportation and Production Systems” of the I-SITE CAP 20-25, by the ANR project GRALMECO (ANR-21-CE48-0004). The research of Dipayan Chakraborty was additionally supported in part by the Lebanese American University under the President's Intramural Research Fund PIRF0056. The research of Michael A. Henning was supported in part by the South African National Research Foundation (grants 132588) and the University of Johannesburg. Research of Tero Laihonen was partially supported by the Research Council of Finland grant number 338797. In addition, we thank the anonymous referees for their helpful comments and insightful suggestions, which were valuable additions to this paper.

\end{document}